\newtheorem{theorem}{Theorem}
\newtheorem{corollary}{Corollary}
\newtheorem{lemma}{Lemma}
\newtheorem{remark}{Remark}
\newenvironment{proof}[1][Proof]{\noindent\textbf{#1.} }{\ \rule{0.5em}{0.5em}}
\begin{document}

\title{\textbf{Pointwise }$\left( H,\Phi \right) $ \textbf{strong\
approximation by Fourier series of }$L^{\Psi }$ \textbf{integrable functions}%
}
\author{\textbf{W\l odzimierz \L enski } \\
University of Zielona G\'{o}ra\\
Faculty of Mathematics, Computer Science and Econometrics\\
65-516 Zielona G\'{o}ra, ul. Szafrana 4a, Poland\\
W.Lenski@wmie.uz.zgora.pl }
\date{}
\maketitle

\begin{abstract}
We essentially extend and improve the classical result of G. H. Hardy and J.
E. Littlewood on strong summability of Fourier series. We will present an
estimation of the generalized strong mean $\left( H,\Phi \right) $ as an
approximation version \ of \ the Totik type generalization \ of the result
of \ G. H. Hardy, J. E. Littlewood, in case of integrable functions from $%
L^{\Psi }$. As a measure of such approximation we will use the function
constructed by function $\Psi $ complementary to $\Phi $ on the base of
definition of the $L^{\Psi }$ points. Some corollary and remarks will also
be given.

\ \ \ \ \ \ \ \ \ \ \ \ \ \ \ \ \ \ \ \ 
\end{abstract}

\footnotetext[1]{%
Key words: Strong approximation, rate of pointwise strong summability,
Orlicz spaces}

\footnotetext[2]{%
2000 Mathematics Subject Classification: 42A24}

\newpage

\section{Introduction}

Let $L^{p}\ (1\leq p<\infty )\;$be the class of all $2\pi $--periodic
real--valued functions integrable in the Lebesgue sense with $p$--th power
over $Q=$ $[-\pi ,\pi ]$.

A mapping $\Phi :%
\mathbb{R}
\rightarrow 
\mathbb{R}
^{+}$ is termed an $N-function$ if $\left( i\right) $ $\Phi $ is even and
convex, $\left( ii\right) $ $\Phi \left( u\right) =0$ iff $u=0$, $\left(
iii\right) $ $\lim\limits_{u\rightarrow 0}\frac{\Phi \left( u\right) }{u}=0,$
\ $\lim\limits_{u\rightarrow \infty }\frac{\Phi \left( u\right) }{u}=\infty
. $ The left derivative $p(t)=\Phi ^{\prime }(t)$\ exists and is left
continuous, nondecreasing on $(0,\infty )$, satisfies\ $0<p(t)<\infty $ for $%
0<t<\infty $, $p(0)=0$ and $\lim_{t\rightarrow \infty }p(t)=\infty $.\ The
left inverse $q$ of $p$ is, by definition, $q(s)=\inf
\{t>0:p(t)>s\}=p^{-1}\left( s\right) $\ for $s>0$.\ Then $\Phi $ and $\Psi $
given by $\Phi (u)=\int_{0}^{\left\vert u\right\vert }p\left( t\right) dt$
and $\Psi (v)=\int_{0}^{\left\vert v\right\vert }q\left( s\right) ds$ are
called a pair of complementary $N-functions$ which satisfy the Young
inequality: $\left\vert uv\right\vert \leq \Phi (u)+\Psi (v).$\ The $%
N-function$ $\Psi $ complementary to $\Phi $ can equally be defined by: $%
\Psi (v)=\sup \{u|v|-\Phi (u):u\geq 0\},$ $v\in 
\mathbb{R}
.$ An example of complementary pair of $N-functions$ is following one: $\Phi
(u)=e^{\left\vert u\right\vert }-\left\vert u\right\vert -1,$ $\Psi
(v)=(1+\left\vert v\right\vert )\log (1+\left\vert v\right\vert )-\left\vert
v\right\vert .$ Using function $\Psi $ we can define the Orlicz space $%
L^{\Psi }=\left\{ f:\int_{_{_{Q}}}\Psi \left( \left\vert f(x)\right\vert
\right) dx<\infty \right\} $.

Consider the trigonometric Fourier series\ 
\begin{equation*}
Sf(x)=\frac{a_{o}(f)}{2}+\sum_{k=1}^{\infty }(a_{k}(f)\cos kx+b_{k}(f)\sin
kx)=\sum_{k=0}^{\infty }c_{k}(f)\exp ikx
\end{equation*}%
and denote by $S_{\nu }f$ \ the partial sums of $Sf$. Then, 
\begin{equation*}
H_{n}^{\Phi }f\left( x\right) :=\Phi ^{-1}\left\{ \frac{1}{n+1}\sum_{\nu
=0}^{n}\Phi \left( \left\vert S_{\nu }f\left( x\right) -f\left( x\right)
\right\vert \right) \right\}
\end{equation*}%
($H_{n}^{\Phi }f=H_{n}^{q}f$\ when $\Phi \left( u\right) =u^{q}$).

As a measure of the above deviation we will use the pointwise characteristic 
$(G_{p,\Psi }-points)$%
\begin{equation*}
G_{x}f\left( \delta \right) _{p,\Psi }:=\Psi ^{_{^{-1}}}\left\{ \sum_{k=1}^{ 
\left[ \pi /\delta \right] }\Psi \left[ \left( \frac{1}{k\delta }%
\int_{\left( k-1\right) \delta }^{k\delta }\left\vert \varphi _{x}\left(
t\right) \right\vert ^{p}dt\right) ^{1/p}\right] \right\} ,\text{ \ \ }p\geq
1,
\end{equation*}%
where \ \ $\varphi _{x}\left( t\right) :=f\left( x+t\right) +f\left(
x-t\right) -2f\left( x\right) ,$ constructed by function $\Psi $
complementary to $\Phi $ on the base of the following definition of the
Gabisonia points $(G_{p,s}-points)$ \cite{1}%
\begin{equation*}
\lim_{n\rightarrow \infty }G_{x}f\left( \frac{\pi }{n+1}\right) _{p,s}=0,
\end{equation*}%
where%
\begin{equation*}
G_{x}f\left( \delta \right) _{p,s}:=\left\{ \sum_{k=1}^{\left[ \pi /\delta %
\right] }\left( \frac{1}{k\delta }\int_{\left( k-1\right) \delta }^{k\delta
}\left\vert \varphi _{x}\left( t\right) \right\vert ^{p}dt\right)
^{s/p}\right\} ^{1/s},\text{ \ \ }s>p\geq 1,
\end{equation*}%
and the characteristic $(L^{\Psi }-points)$%
\begin{equation*}
w_{x}f\left( \delta \right) _{\Psi }:=\Psi ^{_{^{-1}}}\left\{ \frac{1}{%
\delta }\int_{0}^{\delta }\Psi \left( \left\vert \varphi _{x}\left( t\right)
\right\vert \right) dt\right\}
\end{equation*}%
constructed on the base of definition of the Lebesgue points $(L^{p}-points)$
defined as usually by the formula%
\begin{equation*}
\lim_{n\rightarrow \infty }w_{x}f(\frac{\pi }{n+1})_{p}=0,
\end{equation*}%
where%
\begin{equation*}
w_{x}f(\delta )_{p}:=\left\{ \frac{1}{\delta }\int_{0}^{\delta }\left\vert
\varphi _{x}\left( t\right) \right\vert ^{p}dt\right\} ^{1/p}.
\end{equation*}

It is well-known that $H_{n}^{q}f\left( x\right) -$ means tend to $0$ $%
\left( \text{as}\ n\rightarrow \infty \right) $ at the $L^{p}-points$ $x$ of 
$f$ $\in L^{p}$ $\left( 1<p<\infty \right) $ and at the $G_{1,s}-points$ $x$
of $f\in L^{1}$ $\left( s>1\right) .$ These facts were proved as a
generalization of the Fej\'{e}r classical result on the convergence of the $%
\left( C,1\right) $ -means of Fourier series by G. H. Hardy and J. E.
Littlewood in \cite{2, 3} and by O. D. Gabisoniya in \cite{1}, respectively.
In case \ $L^{1}$ and convergence almost everywhere the first results on
this area were due to J. Marcinkiewicz \cite{JM}\ and A. Zygmund \cite{ZA}.
It is also clear, as it shown L. Gogoladze \cite{LG} and W. A. Rodin \cite{R}%
, that $H_{n}^{\Phi }f\left( x\right) -$means, with $\Phi \left( u\right)
=\exp u-1$ also tend to $0$ almost everywhere $\left( \text{as }n\rightarrow
\infty \right) $. The estimates of $H_{n}^{\Phi }f\left( x\right) -$means,
for $f$ $\in L^{p}$ $\left( 1<p<\infty \right) $ was obtained in \cite{WL}
but in the case $f\in L^{1}$ the estimates of $H_{n}^{\Phi }f\left( x\right) 
$ $-$ means with $\Phi \left( u\right) =u^{q}$ was obtained in \cite{10, NR}%
. Finally, the estimation of the $H_{n}^{\Phi }f\left( x\right) -$ mean for $%
f\in L^{1}$ by $G_{x}f\left( \frac{\pi }{n+1}\right) _{1,\Psi }$\ as
approximation version of the Totik type $\left( \text{see \cite{111, 12}}%
\right) $ generalization of the mentioned results of \ J. Marcinkiewicz\ and
A. Zygmund was obtained in \cite{100} as follows:

\textbf{Theorem }\textit{Let }$\Phi ,\Psi \in \digamma $\textit{\ are
complementary pair of }$N-functions$\textit{, such that }$\Psi (x)/x$\textit{%
\ be non-decreasing and }$\Psi (x)/x^{2}$\textit{\ non-increasing, and let }$%
p$\textit{\ be convex, }$q$\textit{\ be non-negative, continuous, and
strictly increasing such that }$\frac{q\left( s\right) }{s}$\textit{\ is
non-increasing. If }$f\in L^{1}$\textit{, then }%
\begin{equation*}
H_{n}^{\Phi }f\left( x\right) \ll G_{x}f\left( \frac{\pi }{n+1}\right)
_{1,\Psi }\text{,}
\end{equation*}%
\textit{for\ }$n=0,1,2,.,$\textit{\ where }%
\begin{equation*}
\digamma =\left\{ \Phi :\Phi \text{ is }N-function\text{ with convex or
concave left derivative }p\right\} .
\end{equation*}

In this paper we will consider the function $f\in L^{\Psi }$ and the
quantity $H_{n}^{\Phi }f\left( x\right) $, but as a measure of approximation
of this quantity we will use the function constructed by function $\Psi $
complementary to $\Phi $ on the base of definition of the $L^{\Psi }$
points.\ We note that O. D. Gabisoniya in \cite{11} shows that for $f$ $\in
L^{p}$ $\left( 1<p<\infty \right) $ the relation%
\begin{equation*}
G_{x}f\left( \delta \right) _{1,p}=o_{x}\left( 1\right)
\end{equation*}%
holds at every $L^{p}-points$ $x$ of $f$. Here we will show that for $f\in
L^{\Psi }$ the relation 
\begin{equation*}
G_{x}f\left( \frac{\pi }{n+1}\right) _{1,\Psi }=o_{x}\left( 1\right)
\end{equation*}%
and thus the relation%
\begin{equation*}
H_{n}^{\Phi }f\left( x\right) =o_{x}\left( 1\right)
\end{equation*}%
hold at every $L^{\Psi }-points$ $x$ of $f.$ More precisely, we will prove
the estimate of the quantity $H_{n}^{\Phi }f\left( x\right) $ by the
characteristic constructed with $L^{\Psi }$ pointwise modulus of continuity.
Such estimate is a significant improvement and extension of the results of
G. H. Hardy and J. E. Littlewood from \cite{2, 3}. Considered here function $%
\Phi $ can be an exponential function but the space $L^{\Psi }$ can be in
between $L^{1}$ and $L^{p}$ with $p>1.$ We also give some corollary with
some example of such functions $\Phi $ and $\Psi .$ This a very sharpened
form of the conjecture of G. H. Hardy and J. E. Littlewood from \cite{4}
proved by Wang, Fu Traing in \cite{WFT}. Additionally, a remark on the
mentioned results of G. H. Hardy and J. E. Littlewood from \cite{2, 3} will
be formulated. Finally, we formulate a remark on the conjugate Fourier
series.

We shall write $I_{1}\ll I_{2}$ if there exists a positive constant $K$,
sometimes depended on some parameters, such that $I_{1}\leq KI_{2}$.\medskip

\section{Statement of the results}

Our main theorem has the following form:

\begin{theorem}
Let $\Phi ,\Psi \in \digamma $ are complementary pair of $N-functions$, such
that $\Psi \left( u\right) $ and $u^{2}$ are equivalent for small $u\geq 0,$ 
$\Psi (x)/x$ be non-decreasing and $\Psi (x)/x^{2}$ non-increasing, and let $%
p$ be convex, $q$ be continuous, and strictly increasing such that $\frac{%
q\left( s\right) }{s}$ is non-increasing and series $\sum_{k=0}^{\infty }%
\frac{1}{\left( k+1\right) ^{1/2}}q\left( \frac{1}{k+1}\right) $ is
convergence. If $f\in L^{\Psi }$, \textit{then} \ 
\begin{eqnarray*}
H_{n}^{\Phi }f\left( x\right) &\ll &\Psi ^{-1}\left[ \sum_{k=0}^{n}\frac{%
\Psi \prime \left( \frac{1}{k+1}\right) }{\left( k+1\right) ^{1/2}}\Psi
\left( w_{x}f\left( \frac{\pi \left( k+1\right) }{n+1}\right) _{\Psi
}\right) \right. \\
&&+\left. \left( n+1\right) \Psi \left( \frac{1}{n+1}\right) \Psi \left(
w_{x}f\left( \pi \right) _{\Psi }\right) \right] \text{,}
\end{eqnarray*}%
for\ $n=0,1,2,...$
\end{theorem}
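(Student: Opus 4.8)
The plan is to work directly from the integral representation of the deviation and to estimate the strong $\Phi$-mean scale by scale, using the orthogonality of the Dirichlet kernel for the "quadratic" part of $\Psi$ and the Orlicz (Young) duality for the remaining part. First I would record the representation
$$S_\nu f(x)-f(x)=\frac{1}{\pi}\int_0^\pi \varphi_x(t)\,\frac{\sin\left(\left(\nu+\tfrac12\right)t\right)}{2\sin(t/2)}\,dt,$$
split the range at $\pi/(n+1)$, and on the far part decompose $(\pi/(n+1),\pi]$ into the intervals $I_k=((k-1)\delta,k\delta]$ with $\delta=\pi/(n+1)$. The bridge between the local integrals and the modulus is the identity $\int_0^\delta \Psi(|\varphi_x(t)|)\,dt=\delta\,\Psi\!\left(w_xf(\delta)_\Psi\right)$, which turns every cumulative energy $\int_0^{(k+1)\delta}\Psi(|\varphi_x|)$ into $(k+1)\delta\,\Psi(w_xf((k+1)\delta)_\Psi)$; this is what ultimately generates the arguments $w_xf(\pi(k+1)/(n+1))_\Psi$ on the right-hand side.

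The core is the estimate of $\frac{1}{n+1}\sum_{\nu=0}^n\Phi(|S_\nu f(x)-f(x)|)$. On each $I_k$ the kernel is $\sin((\nu+\tfrac12)t)$ times the slowly varying weight $1/(2\sin(t/2))\approx (n+1)/(\pi k)$, so the essential quantities are the sine coefficients $c_{k,\nu}=\int_{I_k}\varphi_x(t)\sin((\nu+\tfrac12)t)\,dt$. The orthonormality of $\{\sqrt{2/\pi}\,\sin((\nu+\tfrac12)t)\}$ on $[0,\pi]$ gives, via Bessel's inequality, $\sum_{\nu=0}^n |c_{k,\nu}|^2\ll\int_{I_k}|\varphi_x|^2$, and here the hypothesis that $\Psi(u)$ is equivalent to $u^2$ for small $u$ converts the local $L^2$ energy into $\int_{I_k}\Psi(|\varphi_x|)$ on the set where $|\varphi_x|$ is moderate. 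Summation by parts across the scales $I_k$, together with the factor $(n+1)/(\pi k)$ from the kernel and the cumulative identity above, produces the weights $(k+1)^{-1/2}$ in the first sum and the boundary contribution $(n+1)\Psi(1/(n+1))\Psi(w_xf(\pi)_\Psi)$ at the coarsest scale. The near part $t\in(0,\pi/(n+1)]$ is handled by the crude bound $|D_\nu(t)|\le n+1$ and absorbed into the $k=0$ term.

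The passage from quadratic behaviour to a general $N$-function $\Phi$ is where the complementary function enters: on the set where $|\varphi_x|$ is large the $L^2$ argument fails, and there I would apply the Young inequality $uv\le\Phi(u)+\Psi(v)$ at the scale $1/(k+1)$, exactly as in the Totik-type $L^1$ theorem quoted above, using the convexity of $p$ and the monotonicity of $q(s)/s$. Calibrating the split between the two regimes at the level $1/(k+1)$ is precisely what manufactures the coefficient $\Psi'(1/(k+1))=q(1/(k+1))$ multiplying each $\Psi(w_xf)$, and the assumed convergence of $\sum (k+1)^{-1/2}q(1/(k+1))$ guarantees that the resulting series is finite and that the interchanges are legitimate. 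Finally, since $\Psi(x)/x$ is non-decreasing and $\Psi(x)/x^2$ non-increasing, $\Psi$ (hence $\Psi^{-1}$) satisfies a $\Delta_2$-type regularity, so the accumulated multiplicative constants can be pulled through $\Phi^{-1}$ and $\Psi^{-1}$ to reach the stated form.

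The hard part will be the seamless combination of the two mechanisms: the factor $(k+1)^{-1/2}$ is an averaging/orthogonality gain that must survive even on the large-value set where one is forced to use the Orlicz duality rather than Bessel, so the split into moderate and large values of $\varphi_x$ cannot be made once and for all but must be performed scale by scale, so that the orthogonality gain $(k+1)^{-1/2}$ and the duality weight $q(1/(k+1))$ appear simultaneously. Keeping every constant uniform in $n$ through the two complementary transforms is the delicate bookkeeping the argument requires.
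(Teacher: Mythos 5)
Your proposal sets out to reprove the whole chain from the Dirichlet-kernel representation, but the two places where all the difficulty lives are exactly the places you leave as announcements. First, the passage from Bessel's inequality on the coefficients $c_{k,\nu}$ to a bound on $\frac{1}{n+1}\sum_{\nu=0}^{n}\Phi\left(\left\vert S_{\nu}f(x)-f(x)\right\vert\right)$ for an $N$-function $\Phi$ of exponential type is not a matter of ``applying the Young inequality at the scale $1/(k+1)$'': Bessel controls only the second moments of the $c_{k,\nu}$ over $\nu$, and converting that quadratic control into $\Phi$-integrability of the partial-sum deviations is the entire content of the Gabisoniya/Rodin/Totik machinery. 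You acknowledge this yourself when you call the scale-by-scale interleaving of the orthogonality gain and the duality weight ``the hard part,'' but you do not carry it out, so the proposal is incomplete by its own admission. The paper sidesteps all of this by invoking, as a black box, the Theorem quoted in its Introduction, namely $H_{n}^{\Phi}f(x)\ll G_{x}f\left(\frac{\pi}{n+1}\right)_{1,\Psi}$; the only thing actually proved here is the estimate of $\Psi\left[G_{x}f\left(\frac{\pi}{n+1}\right)_{1,\Psi}\right]$ by the stated right-hand side, via Jensen's inequality and an Abel transformation. If you intend to rebuild that black box from scratch you must supply the argument, not cite its existence.

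Second, you misidentify both the role of the hypothesis that $\Psi(u)$ and $u^{2}$ are equivalent for small $u$ and the origin of the factor $(k+1)^{-1/2}$. In the paper that hypothesis is not used to convert local $L^{2}$ energy into $\Psi$-energy; it is used (Lemma 1) to obtain the submultiplicative estimate $\Psi\left(\frac{u}{k+1}\right)\ll\Psi\left(\frac{1}{k+1}\right)\Psi(u)$ for small $u$, which is what permits splitting $\Psi\left(\frac{n+1}{\pi(k+1)}\int_{\Delta_{k}^{n}}\left\vert\varphi_{x}\right\vert\right)$ into $\Psi\left(\frac{1}{k+1}\right)\Psi\left(\frac{n+1}{\pi}\int_{\Delta_{k}^{n}}\left\vert\varphi_{x}\right\vert\right)$ --- and even this step needs a preliminary argument that $\frac{n+1}{\pi}\int_{\Delta_{k}^{n}}\left\vert\varphi_{x}\right\vert\ll1$ at $L^{\Psi}$-points so that the lemma applies. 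Your sketch has no analogue of this, yet without it the Abel transformation cannot produce the weights $\Psi^{\prime}\left(\frac{1}{k+1}\right)$ attached to $\Psi\left(w_{x}f(\cdot)_{\Psi}\right)$. As for $(k+1)^{-1/2}$: in the paper it is not an orthogonality gain at all; the Abel transformation naturally yields the coefficient $\frac{1}{k+1}\Psi^{\prime}\left(\frac{1}{k+1}\right)$, which is then simply weakened to $\frac{1}{(k+1)^{1/2}}\Psi^{\prime}\left(\frac{1}{k+1}\right)$ to match the assumed convergence of $\sum_{k}(k+1)^{-1/2}q\left(\frac{1}{k+1}\right)$. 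Organizing your argument around extracting $(k+1)^{-1/2}$ from Bessel's inequality therefore aims at the wrong target and would not assemble into the stated right-hand side.
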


From this result we can derive the following corollary.

\begin{corollary}
Let $\Phi (t)=e^{\left\vert t\right\vert }-\left\vert t\right\vert -1$ and $%
\Psi (t)=(1+\left\vert t\right\vert )\log (1+\left\vert t\right\vert
)-\left\vert t\right\vert $. If $f\in L^{\Psi }$, \textit{then } 
\begin{eqnarray*}
&&\sum_{k=0}^{n}\frac{\Psi \prime \left( \frac{1}{k+1}\right) }{\left(
k+1\right) ^{1/2}}\Psi \left( w_{x}f\left( \frac{\pi \left( k+1\right) }{n+1}%
\right) _{\Psi }\right) +\left( n+1\right) \Psi \left( \frac{1}{n+1}\right)
\Psi \left( w_{x}f\left( \pi \right) _{\Psi }\right) \\
&=&o_{x}\left( 1\right) \text{ a.e. (at }L^{\Psi }-points\text{ }x\text{)}
\end{eqnarray*}%
and thus also 
\begin{equation*}
H_{n}^{\Phi }f\left( x\right) =o_{x}\left( 1\right) \text{ a.e. (at }L^{\Psi
}-points\text{ }x\text{).}
\end{equation*}
\end{corollary}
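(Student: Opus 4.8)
The plan is to deduce the Corollary from the Theorem, so the first task is to confirm that the chosen pair $\Phi(t)=e^{|t|}-|t|-1$, $\Psi(t)=(1+|t|)\log(1+|t|)-|t|$ satisfies every hypothesis of the Theorem. Here $p(t)=\Phi'(t)=e^{t}-1$ is convex, its left inverse is $q(s)=\Psi'(s)=\log(1+s)$, which is continuous and strictly increasing with $q(s)/s=\log(1+s)/s$ non-increasing; moreover $\Psi(u)=\tfrac12 u^{2}+o(u^{2})$, so $\Psi(u)$ and $u^{2}$ are equivalent for small $u$, while $\Psi(x)/x$ is non-decreasing (convexity with $\Psi(0)=0$) and one checks $\Psi(x)/x^{2}$ is non-increasing. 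Finally $q(1/(k+1))=\log(1+1/(k+1))\le 1/(k+1)$ gives $\sum_{k}(k+1)^{-1/2}q(1/(k+1))\le\sum_{k}(k+1)^{-3/2}<\infty$, so the series hypothesis holds and the Theorem applies. Since $\Psi^{-1}$ is continuous with $\Psi^{-1}(0)=0$, the relation $H_{n}^{\Phi}f(x)=o_{x}(1)$ will follow from $H_{n}^{\Phi}f(x)\ll\Psi^{-1}[\,\cdots\,]$ the instant the bracketed quantity is shown to vanish. Thus everything reduces to proving that the bracket is $o_{x}(1)$ at an $L^{\Psi}$-point.

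The key simplification I would use is the identity $\Psi\bigl(w_{x}f(\delta)_{\Psi}\bigr)=\frac1\delta\int_{0}^{\delta}\Psi(|\varphi_{x}(t)|)\,dt=:W(\delta)$, obtained by applying $\Psi$ to the definition of $w_{x}f(\delta)_{\Psi}$. At an $L^{\Psi}$-point the defining condition $w_{x}f(\delta)_{\Psi}\to0$ $(\delta\to0)$ is equivalent to $W(\delta)\to0$, and since $W$ is continuous on $(0,\pi]$ with limit $0$ at the origin (the integral $\int_{0}^{\pi}\Psi(|\varphi_{x}|)$ is finite because $f\in L^{\Psi}$ and $\Psi\in\Delta_{2}$), we get $M:=\sup_{0<\delta\le\pi}W(\delta)<\infty$. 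For the boundary term I would then invoke $\Psi(u)\asymp u^{2}$ near $0$: this yields $(n+1)\Psi(1/(n+1))\asymp(n+1)\cdot(n+1)^{-2}=(n+1)^{-1}\to0$, while $\Psi(w_{x}f(\pi)_{\Psi})=W(\pi)$ is a fixed finite constant, so the second summand tends to $0$.

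For the main sum, rewrite it as $\sum_{k=0}^{n}a_{k}\,W\!\bigl(\tfrac{\pi(k+1)}{n+1}\bigr)$ with $a_{k}=\Psi'(1/(k+1))/(k+1)^{1/2}=q(1/(k+1))/(k+1)^{1/2}$, and recall $\sum_{k}a_{k}<\infty$ by the series hypothesis already verified. I would then run the standard Toeplitz-type $\varepsilon$-splitting: given $\varepsilon>0$, pick $N$ with $\sum_{k>N}a_{k}<\varepsilon$. The finite head $\sum_{k\le N}a_{k}W(\pi(k+1)/(n+1))$ tends to $0$ as $n\to\infty$, because for each fixed $k$ the argument $\pi(k+1)/(n+1)\to0$ and $W(\delta)\to0$; the tail is bounded by $M\sum_{k>N}a_{k}<M\varepsilon$ using $W\le M$. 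Letting $\varepsilon\to0$ forces the sum to $0$, which together with the boundary term gives the bracket $=o_{x}(1)$, and hence $H_{n}^{\Phi}f(x)=o_{x}(1)$ by the Theorem.

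The main obstacle is exactly the block of indices $k$ comparable to $n$, where $\pi(k+1)/(n+1)$ is of order $1$ rather than small, so $W$ there need not be small. These terms are tamed not by smallness of $W$ but by the convergence of $\sum_{k}a_{k}$ together with the uniform bound $W\le M$ on $(0,\pi]$ — which is precisely the place where the hypothesis $\sum_{k}(k+1)^{-1/2}q(1/(k+1))<\infty$ becomes indispensable, and which pins down why the power $(k+1)^{1/2}$ and the example's logarithmic $q$ are compatible.
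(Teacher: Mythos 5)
Your proof is correct. It takes essentially the paper's route --- verify the hypotheses of Theorem 1 for this pair $(\Phi ,\Psi )$ and then show the right-hand side of the theorem's estimate is $o_{x}(1)$ --- but you carry out explicitly a step the paper only gestures at. The paper's proof of the corollary consists of the same verifications you make, followed by ``by Theorem 1 and its proof, the results follow immediately''; the argument being invoked is the Gabisoniya-style estimate inside the proof of Theorem 1, where the sum $\sum_{k}(k+1)^{-1/2}q\left( \tfrac{1}{k+1}\right) \left[ \tfrac{n+1}{\pi (k+1)^{3/2}}\int_{0}^{\pi (k+1)/(n+1)}\Psi (|\varphi _{x}(t)|)\,dt\right]$ is bounded by the maximum of the bracketed quantities (using $\sum_{k}(k+1)^{-1/2}q\left( \tfrac{1}{k+1}\right) <\infty$) and that maximum is then split at an index $n_{\epsilon }$ chosen so that $M(x)<\epsilon \sqrt{n_{\epsilon }}$. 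Note, however, that the corollary's sum $\sum_{k}a_{k}W\left( \tfrac{\pi (k+1)}{n+1}\right)$, with $a_{k}=(k+1)^{-1/2}q\left( \tfrac{1}{k+1}\right)$ and $W(\delta )=\Psi \left( w_{x}f(\delta )_{\Psi }\right)$, exceeds the sum treated in the theorem's proof term by term by a factor $(k+1)^{1/2}$, so the ``sum $\ll$ max'' step does not transfer directly: for $k$ comparable to $n$ the average $W$ is merely bounded, not small, and the smallness must instead come from the tail of the convergent series $\sum_{k}a_{k}$ --- which is precisely what your head/tail (Toeplitz) splitting supplies. Thus your argument uses the same ingredients as the paper ($W(\delta )\rightarrow 0$ at an $L^{\Psi }$-point, $W\leq M(x)$ on $(0,\pi ]$, and $\sum_{k}a_{k}<\infty$), but organizes them in the form actually needed for the displayed statement of the corollary; I see no gap.
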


Finally we have also two remarks.

\begin{remark}
Let $\Phi (t)=t^{\alpha }$ and $\Psi (t)=t^{\alpha /\left( \alpha -1\right)
} $ $\left( \alpha \geq 2\right) .$ For such functions the assumptions of
Theorem 1 are fulfilled. If $f\in L^{\Psi }$ \textit{then,} relations of the
before corollary hold evidently. Thus we have the mentioned results of G. H.
Hardy and J. E. Littlewood.
\end{remark}

\begin{remark}
We can observe that in the light of the O. D. Gabisoniya \cite{11} and I.
Ya. Novikov, W. A. Rodin \cite{NR} results our pointwise results remain true
for the conjugate Fourier series too.
\end{remark}

\section{Auxiliary result}

Here we present the following lemma:

\begin{lemma}
If a function $\Psi $ satisfies the conditions $\Psi \left( u\right) \gg
u^{2}$ for small $u\geq 0$ and $\Psi \left( u\right) \ll u^{2}$ for all $%
u\geq 0$, then%
\begin{equation*}
\Psi \left( \frac{u}{n+1}\right) \ll \Psi \left( \frac{1}{n+1}\right) \Psi
\left( u\right)
\end{equation*}%
for small $u\geq 0$ and $n=0,1,2,...$
\end{lemma}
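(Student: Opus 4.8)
The plan is to translate the two one-sided comparisons in the hypothesis into explicit inequalities and then exploit the fact that, near the origin, $\Psi$ is squeezed between two multiples of $u^{2}$, so that it behaves essentially multiplicatively like the square function. Concretely, the hypothesis $\Psi(u)\gg u^{2}$ for small $u$ furnishes constants $c_{1}>0$ and $\delta>0$ with $c_{1}u^{2}\leq\Psi(u)$ for $0\leq u\leq\delta$, while $\Psi(u)\ll u^{2}$ for all $u\geq0$ furnishes a constant $c_{2}>0$ with $\Psi(u)\leq c_{2}u^{2}$ for every $u\geq0$. Writing $m=n+1\geq1$, the quantity to be estimated is $\Psi(u/m)$ and the target upper bound is $K\,\Psi(1/m)\,\Psi(u)$.

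First I would dispose of the bulk of the cases, namely those $m$ with $1/m\leq\delta$. Here all three relevant arguments $u/m$, $1/m$, $u$ lie (for $u\leq\delta$) in the range where the lower bound is available, so I can bound the numerator from above by the global estimate and each factor in the denominator from below:
\[
\Psi\!\left(\frac{u}{m}\right)\leq c_{2}\frac{u^{2}}{m^{2}},\qquad \Psi\!\left(\frac{1}{m}\right)\Psi(u)\geq c_{1}\frac{1}{m^{2}}\cdot c_{1}u^{2}=c_{1}^{2}\frac{u^{2}}{m^{2}}.
\]
Dividing, the powers of $m$ and the factors $u^{2}$ cancel exactly, leaving the uniform constant $c_{2}/c_{1}^{2}$, independent of both $m$ and $u$. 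This cancellation is the heart of the argument and the reason the square-equivalence near $0$ is exactly the right hypothesis.

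The remaining cases are the finitely many $m$ with $1/m>\delta$, that is $1\leq m<1/\delta$; these are the only genuine obstacle, since for them $1/m$ need not be small and the lower bound on $\Psi(1/m)$ is unavailable. For each such fixed $m$, however, $\Psi(1/m)$ is merely a fixed positive constant (positivity coming from $\Psi$ being an $N$-function), so it suffices to bound $\Psi(u/m)$ by a constant multiple of $\Psi(u)$: using $\Psi(u/m)\leq c_{2}u^{2}/m^{2}\leq c_{2}u^{2}$ together with $\Psi(u)\geq c_{1}u^{2}$ for $0\leq u\leq\delta$ gives $\Psi(u/m)\leq(c_{2}/c_{1})\Psi(u)$, and dividing through by the constant $\Psi(1/m)$ produces a constant $K_{m}$ valid for small $u$. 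Finally I would set $K=\max\{c_{2}/c_{1}^{2},K_{1},\dots,K_{M}\}$ with $M=\lfloor 1/\delta\rfloor$, a single constant serving all $m$, and restrict to $0\leq u\leq\delta$ to obtain the asserted inequality for all $n=0,1,2,\dots$ and small $u\geq0$.
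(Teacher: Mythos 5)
Your proof is correct and follows essentially the same route as the paper's one-line argument: bound $\Psi\left(\frac{u}{n+1}\right)$ from above by $c_{2}\left(\frac{u}{n+1}\right)^{2}$ using the global comparison $\Psi(v)\ll v^{2}$, and bound $\Psi\left(\frac{1}{n+1}\right)\Psi(u)$ from below by $c_{1}^{2}\frac{u^{2}}{(n+1)^{2}}$ using the reverse comparison near zero, so the powers cancel. Your explicit handling of the finitely many $n$ for which $\frac{1}{n+1}$ is not small is a detail the paper glosses over, but the substance of the two arguments is identical.
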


\begin{proof}
Our inequality follows at once from the following inequalities 
\begin{equation*}
\Psi \left( \frac{u}{n+1}\right) \ll \left( \frac{u}{n+1}\right) ^{2}\ll
\left( \frac{1}{n+1}\right) ^{2}\Psi \left( u\right) \text{, \ for }u\ll 1.
\end{equation*}
\end{proof}

\section{Proofs of the results}

\subsection{Proof of Theorem 1}

In view of Theorem we have to prove that%
\begin{eqnarray*}
\Psi \left[ G_{x}f\left( \frac{\pi }{n+1}\right) _{1,\Psi }\right]  &\ll
&\sum_{k=0}^{n-1}\frac{1}{\left( k+1\right) ^{2}}\Psi \left( w_{x}f\left( 
\frac{\pi \left( k+1\right) }{n+1}\right) _{\Psi }\right)  \\
&&+\left( n+1\right) \Psi \left( \frac{1}{n+1}\right) \Psi \left(
w_{x}f\left( \pi \right) _{\Psi }\right) .
\end{eqnarray*}%
Let $\Delta _{\nu }^{n}=\left( \frac{\pi \nu }{n+1},\frac{\pi \left( \nu
+1\right) }{n+1}\right) $ for $\nu =0,1,2,...,n$ and%
\begin{equation*}
\sup_{\substack{ n.k \\ 0\leq k\leq n}}\left[ \frac{n+1}{\pi \left(
k+1\right) }\int_{0}^{\frac{\pi \left( k+1\right) }{n+1}}\Psi \left(
\left\vert \varphi _{x}\left( t\right) \right\vert \right) dt\right]
=M(x)<\infty \text{ (at }L^{\Psi }-points\text{ }x\text{).}
\end{equation*}%
Then, for any $\epsilon >0$ there exists a natural number $n_{\epsilon
}=n_{\epsilon }\left( x\right) $ such that $M(x)<\epsilon \sqrt{n_{\epsilon }%
},$ whence by the assumptions on $\Psi $ and $\Psi ^{\prime },$ convexity of 
$\Psi $ and the Abel transformation 
\begin{eqnarray*}
&&\sum_{k=0}^{n}\Psi \left( \frac{1}{k+1}\right) \Psi \left( \frac{n+1}{\pi }%
\int_{\Delta _{k}^{n}}\left\vert \varphi _{x}\left( t\right) \right\vert
dt\right)  \\
&\leq &\sum_{k=0}^{n}\Psi \left( \frac{1}{k+1}\right) \left( \frac{n+1}{\pi }%
\int_{\Delta _{k}^{n}}\Psi \left( \left\vert \varphi _{x}\left( t\right)
\right\vert \right) dt\right) 
\end{eqnarray*}%
\begin{eqnarray*}
&=&\frac{n+1}{\pi }\sum_{k=0}^{n-1}\left[ \Psi \left( \frac{1}{k+1}\right)
-\Psi \left( \frac{1}{k+2}\right) \right] \sum_{\nu =0}^{k}\int_{\Delta
_{\nu }^{n}}\Psi \left( \left\vert \varphi _{x}\left( t\right) \right\vert
\right) dt \\
&&+\frac{n+1}{\pi }\Psi \left( \frac{1}{n+1}\right) \sum_{\nu
=0}^{n}\int_{\Delta _{\nu }^{n}}\Psi \left( \left\vert \varphi _{x}\left(
t\right) \right\vert \right) dt
\end{eqnarray*}%
\begin{eqnarray*}
&\leq &\frac{n+1}{\pi }\sum_{k=0}^{n-1}\Psi \prime \left( \frac{1}{k+1}%
\right) \left( \frac{1}{k+1}\right) ^{2}\int_{0}^{\frac{\pi \left(
k+1\right) }{n+1}}\Psi \left( \left\vert \varphi _{x}\left( t\right)
\right\vert \right) dt \\
&&+\left( n+1\right) \Psi \left( \frac{1}{n+1}\right) \frac{1}{\pi }%
\int_{0}^{\pi }\Psi \left( \left\vert \varphi _{x}\left( t\right)
\right\vert \right) dt
\end{eqnarray*}%
\begin{eqnarray*}
&=&\sum_{k=1}^{n-1}\frac{1}{k+1}\Psi \prime \left( \frac{1}{k+1}\right) %
\left[ \frac{n+1}{\pi \left( k+1\right) }\int_{0}^{\frac{\pi \left(
k+1\right) }{n+1}}\Psi \left( \left\vert \varphi _{x}\left( t\right)
\right\vert \right) dt\right]  \\
&&+\left( n+1\right) \Psi \left( \frac{1}{n+1}\right) \frac{1}{\pi }%
\int_{0}^{\pi }\Psi \left( \left\vert \varphi _{x}\left( t\right)
\right\vert \right) dt,
\end{eqnarray*}%
and by the consideration similar to that of O. D. Gabisoniya in \cite[p. 925]%
{11}%
\begin{eqnarray*}
&&\sum_{k=0}^{n}\frac{1}{\left( k+1\right) ^{1/2}}\Psi \prime \left( \frac{1%
}{k+1}\right) \left[ \frac{n+1}{\pi \left( k+1\right) ^{3/2}}\int_{0}^{\frac{%
\pi \left( k+1\right) }{n+1}}\Psi \left( \left\vert \varphi _{x}\left(
t\right) \right\vert \right) dt\right]  \\
&\ll &\max_{0\leq k\leq n}\left[ \frac{n+1}{\pi \left( k+1\right) ^{3/2}}%
\int_{0}^{\frac{\pi \left( k+1\right) }{n+1}}\Psi \left( \left\vert \varphi
_{x}\left( t\right) \right\vert \right) dt\right] 
\end{eqnarray*}%
\begin{eqnarray*}
&\ll &\left( \max_{0\leq k\leq n_{\epsilon }}+\max_{n_{\epsilon }\leq k\leq
n}\right) \left[ \frac{n+1}{\pi \left( k+1\right) ^{3/2}}\int_{0}^{\frac{\pi
\left( k+1\right) }{n+1}}\Psi \left( \left\vert \varphi _{x}\left( t\right)
\right\vert \right) dt\right]  \\
&\ll &\left[ \frac{n+1}{\pi }\int_{0}^{\frac{\pi \left( n_{\epsilon
}+1\right) }{n+1}}\Psi \left( \left\vert \varphi _{x}\left( t\right)
\right\vert \right) dt\right] +\frac{M\left( x\right) }{\sqrt{n_{\epsilon }}}
\\
&=&o_{x}\left( 1\right) +\epsilon \text{ (at }L^{\Psi }-points\text{ }x\text{%
).}
\end{eqnarray*}%
Since $\lim\limits_{u\rightarrow 0}\frac{\Psi \left( u\right) }{u}=0,$ we
have 
\begin{equation*}
\left( n+1\right) \Psi \left( \frac{1}{n+1}\right) \frac{1}{\pi }%
\int_{0}^{\pi }\Psi \left( \left\vert \varphi _{x}\left( t\right)
\right\vert \right) dt=o_{x}\left( 1\right) ,
\end{equation*}%
and therefore 
\begin{equation*}
\sum_{k=0}^{n}\Psi \left( \frac{1}{k+1}\right) \Psi \left( \frac{n+1}{\pi }%
\int_{\Delta _{k}^{n}}\left\vert \varphi _{x}\left( t\right) \right\vert
dt\right) =o_{x}\left( 1\right) .
\end{equation*}%
This relation with the evident estimate%
\begin{equation*}
\sum_{k=0}^{\infty }\Psi \left( \frac{1}{k+1}\right) \geq \Psi \left(
1\right) >0
\end{equation*}%
yields%
\begin{equation*}
\Psi \left( \frac{n+1}{\pi }\int_{\Delta _{k}^{n}}\left\vert \varphi
_{x}\left( t\right) \right\vert dt\right) =o_{x}\left( 1\right) ,\text{ for }%
k=1,2,3,...,n,
\end{equation*}%
and 
\begin{equation*}
\Psi \left( \frac{n+1}{\pi }\int_{\Delta _{k}^{n}}\left\vert \varphi
_{x}\left( t\right) \right\vert dt\right) \ll 1,\text{ for }k=1,2,3,...,n,
\end{equation*}%
as \ well 
\begin{equation*}
\frac{n+1}{\pi }\int_{\Delta _{k}^{n}}\left\vert \varphi _{x}\left( t\right)
\right\vert dt\ll 1,\text{ for }k=1,2,3,...,n.
\end{equation*}%
Contrary, if we assume 
\begin{equation*}
\Psi \left( \frac{n+1}{\pi }\int_{\Delta _{k}^{n}}\left\vert \varphi
_{x}\left( t\right) \right\vert dt\right) \gg 1,\text{ for }k=1,2,3,...,n,
\end{equation*}%
then%
\begin{equation*}
\sum_{k=0}^{n}\Psi \left( \frac{1}{k+1}\right) \Psi \left( \frac{n+1}{\pi }%
\int_{\Delta _{k}^{n}}\left\vert \varphi _{x}\left( t\right) \right\vert
dt\right) \gg \sum_{k=0}^{n}\Psi \left( \frac{1}{k+1}\right) >0
\end{equation*}%
but it is impossible, whence the conjecture $\frac{n+1}{\pi }\int_{\Delta
_{k}^{n}}\left\vert \varphi _{x}\left( t\right) \right\vert dt\ll 1$ is true.

Hence, by the assumption, Lemma 1 gives%
\begin{equation*}
\Psi \left( \frac{n+1}{\pi \left( k+1\right) }\int_{\Delta
_{k}^{n}}\left\vert \varphi _{x}\left( t\right) \right\vert dt\right) \ll
\Psi \left( \frac{1}{k+1}\right) \Psi \left( \frac{n+1}{\pi }\int_{\Delta
_{k}^{n}}\left\vert \varphi _{x}\left( t\right) \right\vert dt\right)
\end{equation*}%
and consequently\ 
\begin{eqnarray*}
\Psi \left[ G_{x}f\left( \frac{\pi }{n+1}\right) _{1,\Psi }\right]
&=&\sum_{k=0}^{n}\Psi \left( \frac{n+1}{\pi \left( k+1\right) }\int_{\Delta
_{k}^{n}}\left\vert \varphi _{x}\left( t\right) \right\vert dt\right) \\
&\ll &\sum_{k=0}^{n}\Psi \left( \frac{1}{k+1}\right) \Psi \left( \frac{n+1}{%
\pi }\int_{\Delta _{k}^{n}}\left\vert \varphi _{x}\left( t\right)
\right\vert dt\right) .
\end{eqnarray*}%
Applying the above calculation we obtain%
\begin{eqnarray*}
&&\Psi \left[ G_{x}f\left( \frac{\pi }{n+1}\right) _{1,\Psi }\right] \\
&\ll &\sum_{k=0}^{n-1}\frac{1}{\left( k+1\right) ^{1/2}}\Psi \prime \left( 
\frac{1}{k+1}\right) \left[ \frac{n+1}{\pi \left( k+1\right) }\int_{0}^{%
\frac{\pi \left( k+1\right) }{n+1}}\Psi \left( \left\vert \varphi _{x}\left(
t\right) \right\vert \right) dt\right] \\
&&+\left( n+1\right) \Psi \left( \frac{1}{n+1}\right) \frac{1}{\pi }%
\int_{0}^{\pi }\Psi \left( \left\vert \varphi _{x}\left( t\right)
\right\vert \right) dt.
\end{eqnarray*}%
Finally, by the definition of the $w_{x}f\left( \cdot \right) _{\Psi }$ the
desired at the begin estimate follows. $\blacksquare $

\subsection{Proof of Corollary 1}

At the begin, we note that if $\Phi (t)=e^{\left\vert t\right\vert
}-\left\vert t\right\vert -1,$ $\Psi (t)=(1+\left\vert t\right\vert )\log
(1+\left\vert t\right\vert )-\left\vert t\right\vert ,$ then$\ \frac{\Psi (t)%
}{t}$\ and $q\left( t\right) =\Psi ^{\prime }(t)=\log (1+\left\vert
t\right\vert )$\ increase, series $\sum_{k=0}^{\infty }\frac{1}{\left(
k+1\right) ^{1/2}}q\left( \frac{1}{k+1}\right) $ is convergence,$\ \frac{%
\Psi (t)}{t^{2}}$ and $\frac{q\left( t\right) }{t}$\ decrease\ and $\Psi
\left( t\right) \leq t^{2}$ for all $t\geq 0$ as well $\Psi \left( t\right)
\gg t^{2}$ for small $t\geq 0.$ Therefore, by Theorem 1 and its proof, the
results follow immediately. $\blacksquare $


\begin{thebibliography}{99}
\bibitem{1} Gabisoniya, O. D.: On the points of strong summability of
Fourier series, Mat. Zam. 14, No 5, 615-626, (1973)( in Russian)

\bibitem{11} Gabisoniya, O. D.: Points of strong summability of Fourier
series, Ukrainskii Matematicheskii Zhurnal, Vol. 44, No. 8, pp. 1020-1031,
(1992) ( in Russian)

\bibitem{LG} Gogoladze, L. D.: On strong summability almost everywhere, Mat.
Sb. (N.S.), 135(177):2, 158--168, (1988) ( in Russian)

\bibitem{2} Hardy, G. H., Littlewood, J. E.: Sur la s\'{e}rie de Fourier
d'une function a car\'{e}\ sommable, Comptes Rendus, Vol.28, 1307-1309 (1913)

\bibitem{3} Hardy, G. H., Littlewood, J. E.: On strong summability of
Fourier series, Proc. London Math. Soc. 273-286, (1926)

\bibitem{4} Hardy, G. H., Littlewood, J. E.: The strong summability of
Fourier series, Fund. Math. 25, 162-189, (1935)

\bibitem{KR} Krasnoselskii, M. A., Rutickii, Ya. B.: Convex Functions and
Orlicz Spaces, Noordhoff, Groningen, (1961)

\bibitem{10} \L enski, W.: On the strong approximation by $\left( C,\alpha
\right) $-means of Fourier series, Math Nachrichten \ 146, 207-220, (1990)

\bibitem{WL} \L enski, W.: Pointwise strong and very strong\ approximation
of Fourier series, Acta Math. Hungar. 115, no.3, 215-233, (2007)

\bibitem{100} \L enski, W.: Pointwise $\left( H,\Phi \right) $ strong\
approximation by Fourier series of $L^{1}$ integrable functions, Results
Math 73:132, (2018)

\bibitem{JM} Marcinkiewicz, J.: Sur la sommabilite forte de series de
Fourier, J. London Math. Soc. 14, 162-168, (1939)

\bibitem{NR} Novikov, I. Ya., Rodin, W. A.: Characterization of points $p$%
-strong summability of trigonometrics series, $p\geq 2$, Izv. Vyssh. Uchebn.
Zaved. Mat., no. 9, 58--62, (1988)

\bibitem{R} Rodin, W. A.: BMO - strong means of Fourier series, Funct. Anal.
Appl., 23:2, 145--147, (1989)

\bibitem{111} Totik, V.: On Generalization of Fej\'{e}r summation theorem,
Coll. Math. Soc. J. Bolyai 35 Functions series, operators, Budapest
(Hungary), 1185-1199, (1980)

\bibitem{12} Totik, V.: Notes on Fourier series: Strong approximation, J.
Approx. Th. 43, 105-111, (1985)

\bibitem{ZA} Zygmund, A.: On the convergence and summability of power series
on the circle of convergence, P.L.M.S. 47, 326-50, (1941)

\bibitem{AZ} Zygmund, A.: Trigonometric series, Cambride, (1959)

\bibitem{WFT} Wang, Fu Traing,: Strong summability of Fourier series, Duke
Math. J. 12, 77-87, (1945)
\end{thebibliography}
\end{document}